\newtheorem{theorem}{Theorem}
\newtheorem{lemma}{Lemma}
\newtheorem{corollary}{Corollary}
\newtheorem{claim}{Claim}
\newtheorem{problem}{Problem}
\newenvironment{proof}
      {\medskip\noindent{\bf Proof:}\hspace{1mm}}
      {\hfill$\Box$\medskip}
\def\Ddots{\mathinner{\mkern1mu\raise\p@
\vbox{\kern7\p@\hbox{.}}\mkern2mu
\raise4\p@\hbox{.}\mkern2mu\raise7\p@\hbox{.}\mkern1mu}}
\title{\vspace{-0.7cm}Ramsey numbers of sparse hypergraphs}
\author{David Conlon\thanks{St John's College, Cambridge, United Kingdom. E-mail: {\tt
D.Conlon@dpmms.cam.ac.uk}} \and Jacob Fox\thanks{Department of
Mathematics, Princeton, Princeton, NJ. Email: {\tt
jacobfox@math.princeton.edu}. Research supported by an NSF Graduate
Research Fellowship and a Princeton Centennial Fellowship.} \and
Benny Sudakov\thanks{Department of Mathematics,
UCLA,  Los Angeles, CA 90095 and Institute for Advanced Study, Princeton, NJ. Email: {\tt
bsudakov@math.ucla.edu}.
Research supported in part by NSF CAREER award DMS-0546523, NSF
grants DMS-0355497 and DMS-0635607, by a USA-Israeli BSF grant, and by the State of New
Jersey.}}
\date{}
\begin{document}
\maketitle

\begin{abstract}
We give a short proof that any
$k$-uniform hypergraph $H$ on $n$ vertices with bounded degree
$\Delta$ has Ramsey number at most $c(\Delta, k) n$, for an
appropriate constant $c(\Delta, k)$. This result was recently proved by several authors,
but those proofs are all based on
applications of the hypergraph regularity method. Here we give a
much simpler, self-contained proof which uses new techniques developed recently by the
authors together with an argument of Kostochka and R\"odl. Moreover, our
method demonstrates that, for $k \geq 4$,
\[c(\Delta, k) \leq 2^{2^{\Ddots^{2^{c \Delta}}}},\]
where the tower is of height $k$ and the constant $c$ depends on
$k$. It significantly improves on the Ackermann-type upper bound
that arises from the regularity proofs, and we present a
construction which shows that, at least in certain cases, this
bound is not far from best possible. Our methods also allows us to prove quite sharp results on the Ramsey number of
hypergraphs with at most $m$ edges.
\end{abstract}

\section{Introduction}

For a graph $H$, the {\it Ramsey number} $r(H)$ is the least
positive integer $N$ such that, in every two-colouring of the edges
of complete graph $K_N$ on $N$ vertices, there is a monochromatic
copy of $H$. Ramsey's theorem states that $r(H)$ exists for every
graph $H$. A classical result of Erd\H{o}s and Szekeres, which is a
quantitative version of Ramsey's theorem, implies that $r(K_k) \leq
2^{2k}$ for every positive integer $k$. Erd\H{o}s showed using
probabilistic arguments that $r(K_k) > 2^{k/2}$ for $k
> 2$. Over the last sixty years, there have been several
improvements on these bounds (see, e.g., \cite{Co}). However,
despite efforts by various researchers, the constant factors in the
above exponents remain the same.

Determining or estimating Ramsey numbers is one of the central
problem in combinatorics, see the book {\it Ramsey theory}
\cite{GRS80} for details. Besides the complete graph, the next most
classical topic in this area concerns the Ramsey numbers of sparse
graphs, i.e., graphs with certain upper bound constraints on the
degrees of the vertices. The study of these Ramsey numbers was
initiated by Burr and Erd\H{o}s in 1975, and this topic has since
placed a  central role in graph Ramsey theory. Burr and Erd\H{o}s
conjectured, and it was proved by Chv\'atal, R\"odl, Szemer\'edi and
Trotter \cite{CRST83}, that for every graph $G$ on $n$ vertices and
maximum degree $\Delta$,
\[r(G) \leq c(\Delta) n.\]
Their proof of this theorem is a classic application of Szemer\'edi's beautiful
regularity lemma. However, the use of this lemma makes the upper bound on $c(\Delta)$ grow as a tower of $2$s with height
proportional to $\Delta$. Eaton \cite{E98} used a variant of the regularity lemma
to obtain the upper bound $c(\Delta) \leq 2^{2^{c \Delta}}$ for some fixed $c$. A novel approach of Graham,
R\"odl, Rucinski \cite{GRR00} that did not use any form of the regularity lemma gives the
upper bound $c(\Delta) \leq 2^{c\Delta \log^2 \Delta}$ for some fixed $c$. In the other direction, in \cite{GRR01}
they proved that there is a positive constant $c$ such that, for every
$\Delta \geq 2$ and $n \geq \Delta+1$, there is a bipartite graph
$G$ with $n$ vertices and maximum degree at most $\Delta$ satisfying $r(G)
\geq 2^{c\Delta}n$. Recently, the authors \cite{C07}, \cite{FS07}
closed the gap for bipartite graphs by showing that, for every bipartite graph $G$ with $n$ vertices and maximum degree
$\Delta$, $r(G) \leq 2^{c\Delta}n$ for some fixed $c$.

A {\it hypergraph} $H=(V,E)$ consists of a vertex set $V$ and an edge set
$E$, which is a collection of subsets of $V$. A hypergraph is
{\it $k$-uniform} if each edge has exactly $k$ vertices. The {\it Ramsey number}
$r(H)$ of a $k$-uniform hypergraph $H$ is the smallest number $N$
such that, in any $2$-colouring of the edges of the complete
$k$-uniform hypergraph $K_N^{(k)}$, there is guaranteed to be a
monochromatic copy of $H$. The existence of these numbers was proven by Ramsey \cite{R30},
but no proper consideration of the values of
these numbers was made until the fifties, when Erd\H{o}s and Rado
\cite{ER52}. To understand the growth of Ramsey numbers for hypergraphs,
it is useful to introduce the tower function $t_i(x)$, which is defined
by $t_1(x)=x$ and $t_{i+1}(x)=2^{t_i(x)}$, i.e.,
\[t_{i+1}(x)=2^{2^{\Ddots^{2^{x}}}},\]
where the number of $2$s in the tower is $i$. Erd\H{o}s and Rado showed that for $H$ being the complete $k$-uniform hypergraph $K_l^{(k)}$,
$r(H) \leq t_k(cl)$, where the constant $c$ depends on $k$. In the other direction, Erd\H{o}s and Hajnal (see \cite{GRS80})
proved that for
$H = K_l^{(k)}$,
$r(H) \geq t_{k-1}(cl^2)$, where the constant $c$ depends on $k$.

One can naturally try to extend the sparse graph Ramsey results to hypergraphs. Kostochka and R\"odl \cite{KR06}
showed that for every $\epsilon>0$, the Ramsey number of any $k$-uniform hypergraph $H$
with $n$ vertices and maximum degree $\Delta$ satisfies \[r(H) \leq c(\Delta, k, \epsilon) n^{1 + \epsilon},\]
where $c(\Delta,k,\epsilon)$ only depends on $\Delta$, $k$, and $\epsilon$. Since the first proof of the sparse graph Ramsey
theorem used Szemer\'edi's regularity lemma, it was therefore natural to expect that, given the
recent advances in developing a hypergraph regularity method
\cite{G07, RS04, NRS06}, linear bounds might as well be provable
for hypergraphs. Such a program was indeed recently pursued by
several authors (Cooley, Fountoulakis, K\"uhn, and Osthus \cite{CFKO07, CFKO072}; Nagle, Olsen, R\"odl, and Schacht
\cite{NORS07}; Ishigami \cite{I07}), with the result that we now
have the following theorem:

\begin{theorem}
Let $\Delta$ and $k$ be positive integers. Then there exists a
constant $c(\Delta,k)$ such that the Ramsey number of any
$k$-uniform hypergraph $H$ with $n$ vertices and maximum degree
$\Delta$ satisfies
\[r(H) \leq c(\Delta, k) n.\]
\end{theorem}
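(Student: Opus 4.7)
The plan is to prove the theorem by induction on the uniformity $k$, with the base case $k=2$ supplied by the bounded-degree graph Ramsey theorem (taking, for instance, the Graham--R\"odl--Ruci\'nski bound of $2^{c\Delta \log^2 \Delta}\,n$). For the inductive step, fix a 2-colouring of the edges of $K_N^{(k)}$ with $N = c(\Delta,k)\,n$; pigeonholing into the majority colour, the task is to embed $H$ into a $k$-uniform hypergraph $G$ on $[N]$ of density at least $1/2$.

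The engine of the argument is a hypergraph analogue of weighted dependent random choice, continuing the line of work from the authors' recent papers on bipartite graph Ramsey numbers. The aim is to produce a subset $U \subseteq V(G)$ of size proportional to $N$ in which, for any $\Delta$-family of $(k-1)$-subsets $T_1,\ldots,T_\Delta \subseteq U$ that may arise during an embedding of $H$, the common link $\{v \in U : T_i \cup \{v\} \in G \text{ for all } i\}$ has size at least a constant fraction of $|U|$. Such a $U$ is built by sampling a random auxiliary set and retaining those vertices whose link shadows onto it are dense; controlling these shadows uniformly over the relevant $(k-1)$-subsets requires a Ramsey-type density statement at uniformity $k-1$, which is exactly what the induction hypothesis provides.

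Given such a $U$, the Kostochka--R\"odl greedy embedding completes the proof. Enumerate the vertices of $H$ as $v_1,\ldots,v_n$ and, at stage $i$, maintain for every unembedded vertex $w$ a candidate image set $C_w \subseteq U$ of size at least a fixed fraction of $|U|$, consisting of images compatible with every edge of $H$ already embedded that contains $w$. When $v_i$ is embedded and the candidate sets are updated, one intersects each $C_w$ with at most $\Delta$ majority-colour link sets of previously embedded $(k-1)$-tuples; by the density property built into $U$, each intersection costs only a constant factor, so the candidate sets remain linear in $|U|$ through all $n$ stages and a valid image for $v_i$ always exists.

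The principal obstacle is quantitative. Each recursive invocation in the construction of $U$ costs a single exponential, and obtaining the advertised tower of height $k$ requires that no step costs more than that; this is what the authors' weighted dependent random choice achieves over Kostochka and R\"odl's original selection, which only yields $n^{1+\epsilon}$. The technical heart of the proof will therefore be a carefully tuned density lemma whose parameters telescope cleanly through the $k-1$ levels of the recursion, so that the induction closes with a clean iterated exponential bound rather than the Ackermann-type blow-up that accompanies hypergraph regularity.
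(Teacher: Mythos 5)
The central step of your plan --- ``pigeonholing into the majority colour, the task is to embed $H$ into a $k$-uniform hypergraph $G$ on $[N]$ of density at least $1/2$'' --- is not a valid reduction. A $k$-uniform hypergraph of density $1/2$ need not contain every bounded-degree $H$ on $n$ vertices, no matter how large $N$ is: already for $k=2$ the complete bipartite graph has density $1/2$ and no triangle, and for $k=3$ Tur\'an's example of a $K_4^{(3)}$-free $3$-uniform hypergraph of density $5/9$ does the same job. Density-type embedding statements of the kind you want are only true when the target $F$ is $l$-partite and $l$-uniform, and this is precisely the point of the Kostochka--R\"odl argument, which you invoke by name but attach to the wrong step: their contribution is not the greedy embedding but the reduction that converts the $2$-coloured complete $k$-uniform hypergraph into a dense $l$-uniform $l$-partite hypergraph with $l=(k-1)\Delta+1$, whose edges are the (abundantly many) monochromatic copies of $K_l^{(k)}$, while $H$ is simultaneously extended to an $l$-partite $l$-uniform $H^{(l)}$ using that its strong chromatic number is at most $l$. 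Only after this reduction does a Tur\'an-type density embedding become available, and embedding $H^{(l)}$ there forces every $k$-subset of each image edge to be monochromatic, which is what returns a monochromatic copy of $H$.

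Your second structural choice, an induction on the uniformity $k$ in which the Ramsey statement at uniformity $k-1$ is used to ``control link shadows,'' is likewise unsubstantiated: the inductive hypothesis is a statement about monochromatic copies of bounded-degree $(k-1)$-uniform hypergraphs in colourings of complete $(k-1)$-uniform hypergraphs, not a density statement about links, and you give no mechanism for converting one into the other. The paper's recursion is of a different nature and stays entirely within the density framework: in the $l$-partite setting it applies dependent random choice $l-1$ times, peeling off one vertex class at a time (passing from an $r$-uniform $r$-partite hypergraph to an $(r-1)$-uniform one while controlling, for every small set of edges, how many vertices of the removed class extend all of them), and each such step costs only a power of the density, not an exponential. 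The tower of height $k$ enters solely through the Erd\H{o}s--Rado bound $r_k(l)\le t_k(cl)$ in the initial density $\epsilon = l!/\left(2r_k(l)^l\right)$, not through nested exponential losses in a recursion on $k$. As written, your proposal is missing the one idea that makes the argument go.
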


In this paper we will give a short proof of this theorem, which is much simpler and avoids all use of the regularity lemma,
building instead on techniques developed recently by Conlon
\cite{C07} and by Fox and Sudakov \cite{FS07} in order to study embeddings of sparse bipartite graphs in dense graphs.

The first main result of this paper is an extension of this work
from graphs to hypergraphs. An $l$-uniform hypergraph is {\it $l$-partite} if there is a partition of the vertex set into
$l$ parts such that each edge has exactly one vertex in each part. We prove the following Tur\'an-type
result for $l$-uniform $l$-partite hypergraphs:

\begin{theorem} There exists a constant $c=c(l)$ such that if $F$ is
an $l$-uniform $l$-partite hypergraph with $n$ vertices and
maximum degree $\Delta$ and $G$ is an $l$-uniform $l$-partite hypergraph with parts of size $N \geq
\left(\epsilon/2\right)^{-c \Delta^{l-1}} n$ and at least $\epsilon N^l$ edges, then $G$ contains a copy of $F$.
\end{theorem}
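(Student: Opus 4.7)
The plan is to prove Theorem 2 by induction on the uniformity $l$. The base case $l=2$ is precisely the bipartite embedding lemma of Conlon \cite{C07} and Fox--Sudakov \cite{FS07} cited above, which yields the bound $N\geq(\epsilon/2)^{-c\Delta}n$.

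For the inductive step, suppose the statement holds for $(l-1)$-uniform $(l-1)$-partite hypergraphs, and let $F$ (with parts $V_1,\dots,V_l$) and $G$ (with parts $U_1,\dots,U_l$ of size $N$) be as in the statement. I would embed $F$ into $G$ in two phases: first embed all vertices of $V_l$ into $U_l$, then embed the rest. For each unembedded vertex $v\in V_j$ with $j<l$, maintain a candidate set $C_v\subseteq U_j$, initially $U_j$. When embedding $w\in V_l$, let $e_1,\dots,e_r$ (with $r\leq\Delta$) enumerate the edges of $F$ containing $w$ and write $e_i=\{w,u^{(i)}_1,\dots,u^{(i)}_{l-1}\}$. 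An averaging argument over $U_l$, using that $G$ has at least $\epsilon N^l$ edges and that the products $C_{u^{(i)}_1}\times\cdots\times C_{u^{(i)}_{l-1}}$ are kept large by the invariant below, produces an image $\phi(w)\in U_l$ whose link $L_G(\phi(w))$, restricted to each such product, still has density at least $\epsilon/2$. For each edge $e_i$ I then apply a dependent-random-choice step to each of the candidate sets $C_{u^{(i)}_j}$, replacing it by a subset of size at least a factor $(\epsilon/2)^{c'\Delta^{l-2}}$ of its previous size, on which the link remains dense as an $(l-1)$-uniform $(l-1)$-partite hypergraph.

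Since each $v\in V_1\cup\cdots\cup V_{l-1}$ has $F$-degree at most $\Delta$, its candidate set is shrunk at most $\Delta$ times during Phase~1, so the overall shrinkage is at most $(\epsilon/2)^{c\Delta^{l-1}}$; thus each $C_v$ at the end of Phase~1 has size at least $(\epsilon/2)^{c\Delta^{l-1}}N\geq n$. Phase~2 then asks for an embedding of $V_1\cup\cdots\cup V_{l-1}$ into the updated candidate sets so that, for each edge $\{w,u_1,\dots,u_{l-1}\}$ of $F$, the images $\phi(u_j)$ form an edge of $L_G(\phi(w))$. I would encode this as an $(l-1)$-uniform $(l-1)$-partite embedding problem on a target hypergraph built from the relevant links (either by intersecting the links with controlled density loss, or by passing to a rainbow/labelled version) and finish via a second application of the inductive hypothesis.

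The main obstacle is the joint nature of hypergraph constraints: each edge of $F$ constrains $l-1$ unembedded vertices \emph{simultaneously}, rather than a single vertex as in the graph case, so naive coordinate-wise candidate-set restrictions do not propagate constraints correctly. The key technical step is the dependent-random-choice shrinkage that, starting from a dense $(l-1)$-uniform $(l-1)$-partite link on a product set, extracts large coordinate subsets preserving product density. Getting this shrinkage to cost only a factor of $(\epsilon/2)^{c'\Delta^{l-2}}$ per update -- so that the total cost across $\Delta$ updates per vertex remains $(\epsilon/2)^{c\Delta^{l-1}}$ -- is the crux of the argument, and this is where the authors' ideas together with the Kostochka--R\"odl framework \cite{KR06} for organizing greedy embeddings come in.
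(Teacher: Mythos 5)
Your proposal correctly identifies the central difficulty --- that each edge of $F$ constrains $l-1$ unembedded vertices simultaneously --- but it does not resolve it, and the two places where the argument would have to do real work are both left open in ways I do not think can be repaired within your framework. First, in Phase 1 the averaging step that produces $\phi(w)$ requires many vertices of $U_l$ to have links that are dense \emph{restricted to the products} $C_{u^{(i)}_1}\times\cdots\times C_{u^{(i)}_{l-1}}$; but the global density of $G$ together with the mere largeness of the candidate sets gives no control over the density of $G$ on such a product (it can be zero), so the invariant you appeal to cannot be maintained by coordinate-wise shrinking. Second, even granting Phase 1, the Phase 2 problem is not an instance of the inductive hypothesis: each edge $\{w,u_1,\dots,u_{l-1}\}$ of $F$ must be mapped into the \emph{specific} link $L_G(\phi(w))$, so you need a ``labelled'' embedding result in which every edge of the pattern has its own prescribed dense target. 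Intersecting the links destroys density, and the rainbow version is strictly stronger than the statement you are inducting on, so the induction does not close.

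The paper avoids both problems by not inducting on $l$ at the level of the embedding. It applies dependent random choice $l-1$ times \emph{up front} to produce a nested chain $G_l, G_{l-1},\dots, G_1$ of dense $r$-uniform $r$-partite hypergraphs, and --- this is the decisive point --- it controls not the common neighbourhood of a single edge but the number of \emph{dangerous sets}: collections of at most $\Delta$ edges of $G_{r-1}$ whose set of common extensions into the next vertex class is small. A counting argument (the ``bad hypergraph'' $B$ and the notion of good sets) then shows that a single greedy embedding, processing $W_l,W_{l-1},\dots,W_1$ in order and maintaining that every partial image of an edge lies in the appropriate $G_r$ and every image of a trace neighbourhood is good, never gets stuck: when a vertex is to be embedded, the set of $d\le\Delta$ already-embedded truncated edges containing it is guaranteed not to be dangerous, hence has at least $\beta N$ common extensions. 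This control over $\Delta$-tuples of edges, rather than over per-vertex candidate sets, is exactly the mechanism missing from your proposal; to salvage your plan you would need to replace the sets $C_v$ by a structure recording, for every set of up to $\Delta$ partially embedded edges, that their joint extension set is large --- which is essentially what the paper's dangerous-set bookkeeping accomplishes.
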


Then, in section 3, we will prove Theorem 1 by applying an argument
of Kostochka and R\"odl which shows that the Ramsey problem for general
hypergraphs may be reduced to an application of the Tur\'an theorem
in the $l$-uniform $l$-partite case. This argument combined with our Theorem 2 shows that,
for $k \geq 4$ and $k$-uniform hypergraph $H$ with $n$ vertices and maximum degree $\Delta$,
\[r(H) \leq t_k(c\Delta)n,\]
where the constant $c$ depends on
$k$. For $k=3$, the proof shows that $r(H) \leq t_3(c\Delta\log \Delta)n$.
This is clearly much better than the Ackermann-type upper bound that
arises from the regularity proofs. The tower-type upper bound cannot be avoided as demonstrated by
the lower bound of Erd\H{o}s and Hajnal for the Ramsey number of the
complete $k$-uniform hypergraph on $n$ vertices. This hypergraph has
maximum degree $\Delta={n-1 \choose k-1}$ and Ramsey number at least $t_{k-1}(c\Delta^{\frac{2}{k-1}})n$, where
the constant $c$ depends on $k$.

For $k$-uniform hypergraphs $H_1,\ldots,H_q$, the {\it multicolour Ramsey
number} $r(H_1,\ldots,H_q)$ is the minimum $N$ such that, in any
$q$-colouring of the edges of the complete $k$-uniform hypergraph
$K_N^{(k)}$ with colours $1,\ldots,q$, there is a monochromatic copy
of $H_i$ in colour $i$ for some $i$, $1 \leq i \leq q$. The proof of Theorem 1
presented here extends in a straightforward manner to the
multicolour generalisation, which states that for all positive integers $\Delta$, $k$, and $q$, there exists a
constant $c(\Delta,k,q)$ such that, if $H_1,\ldots,H_q$ are
$k$-uniform hypergraphs each with $n$ vertices and maximum degree
$\Delta$, then $r(H_1,\ldots,H_q) \leq c(\Delta, k,q) n$. The proof demonstrates
that may take $c(\Delta,k,q) \leq t_k(c\Delta)$ for $k \geq 4$
and $c(\Delta,3,q) \leq t_3(c \Delta \log \Delta)$, where the constant $c$ depends on $k$ and $q$.
In the other direction,
in Section 4 we construct, for each sufficiently large $\Delta$,  a $3$-uniform hypergraph $H$ with maximum degree at most $\Delta$
for which the $4$-colour Ramsey number of $H$ satisfies $r(H,H,H,H) \geq t_3(c\Delta)n$,
where $n$ is the number of vertices of $H$. This example shows that our upper bound for hypergraph Ramsey numbers
is probably close to being best possible.

The same example also shows that there is a $3$-uniform hypergraph $H$ with $m$ edges for which the $4$-colour
Ramsey number of $H$ is at least $t_3(c\sqrt{m})$. On the other hand, one can easily deduced from the proof of Theorem 1 that for any $k$-uniform hypegraph $H$ with
 $m$ edges, we have that the $q$-colour Ramsey number of $H$ satisfies $r(H,\cdots,H) \leq t_3(c\sqrt{m}\log m)$ for $k=3$,
 and $r(H,\cdots,H) \leq t_k(c\sqrt{m})$ for $k \geq 4$, where $c$ depends on $k$ and $q$.

\section{A Tur\'an theorem for $l$-uniform $l$-partite hypergraphs}

The following is a generalisation to hypergraphs of a lemma which
has appeared increasingly in the literature on Ramsey theory, whose
proof uses a probabilistic argument known as {\it dependent random
choice}. Early versions of this technique were developed in the
papers \cite{G98}, \cite{KR01}, \cite{S03}. Later, variants were
discovered and applied to various Ramsey and density-type problems
(see, e.g., \cite{KS03,AKS03,S05,KR06,FS07,C07}). We define the {\it weight} $w(S)$ of a set $S$ of edges in a hypergraph to be the size of the union of these edges.
\begin{lemma}
Suppose $s,\Delta$ are positive integers, $\epsilon,\beta > 0$, and $G_r = (V_1, \cdots, V_r; E)$ is an
$r$-uniform $r$-partite hypergraph with $|V_1| = |V_2| = \cdots =
|V_r| = N$ and at least $\epsilon N^r$ edges. Then there exists an $(r-1)$-uniform $(r-1)$-partite hypergraph $G_{r-1}$
on the vertex sets $V_2, \cdots, V_r$ which has at least
$\frac{\epsilon^s}{2} N^{r-1}$ edges and such that for each nonnegative integer $w \leq (r-1)\Delta$,
there are at most $4r\Delta\epsilon^{-s}\beta^{s}w^{r\Delta}r^wN^w$ dangerous sets of edges of $G_{r-1}$ with weight $w$,
where a set $S$ of edges of $G_{r-1}$ is dangerous if $|S| \leq \Delta$ and the number of vertices $v \in V_1$ such that
for every edge $e \in S$, $e+v \in G_r$ is less than $\beta N$.
\end{lemma}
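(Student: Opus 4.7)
The plan is to apply the dependent random choice technique. Sample $T = (t_1, \ldots, t_s)$ by picking $s$ vertices of $V_1$ independently and uniformly at random with repetition, and let $G_{r-1}$ be the random $(r-1)$-partite hypergraph on $V_2, \ldots, V_r$ whose edges are those $(r-1)$-tuples $(v_2, \ldots, v_r)$ for which $(t, v_2, \ldots, v_r) \in E(G_r)$ for every $t \in T$. The whole point of this construction is that whenever a dangerous set $S$ of edges of $G_{r-1}$ materialises, each $t \in T$ must already lie inside the common extension set $U(S) := \{v \in V_1 : v + e \in E(G_r) \text{ for all } e \in S\}$, which has size less than $\beta N$ by definition of ``dangerous''; hence the probability that such an $S$ ends up in $G_{r-1}$ is strictly less than $\beta^s$.

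I would then combine two expectation computations. For $X := |E(G_{r-1})|$, a fixed $(r-1)$-tuple with codegree $d$ in $G_r$ appears in $G_{r-1}$ with probability $(d/N)^s$, and applying Jensen's inequality to $x \mapsto x^s$ yields
\[\mathbb{E}[X] \;=\; \sum_{(v_2, \ldots, v_r)} \bigl(d(v_2,\ldots,v_r)/N\bigr)^s \;\ge\; \epsilon^s N^{r-1}.\]
Since $X \le N^{r-1}$ deterministically, a one-line averaging argument upgrades this to $\Pr[X \ge \epsilon^s N^{r-1}/2] \ge \epsilon^s/2$. For the dangerous sets, let $Y_w$ count those of weight exactly $w$. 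I would bound the number of candidate edge sets with $|S|\le \Delta$ and weight $w$ by first choosing their $w$-vertex support inside $V_2 \cup \cdots \cup V_r$ (at most $\binom{(r-1)N}{w} \le r^w N^w$ ways), then choosing at most $\Delta$ edges among the at most $w^{r-1}$ possible $(r-1)$-tuples on it (at most $(\Delta+1)w^{(r-1)\Delta}$ ways). Multiplying by the $\beta^s$ bound gives
\[\mathbb{E}[Y_w] \;\le\; (\Delta+1)\,w^{(r-1)\Delta}\,r^w N^w \beta^s.\]

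Now Markov's inequality yields
\[\Pr\bigl[Y_w > 4r\Delta\, \epsilon^{-s} \beta^s w^{r\Delta} r^w N^w\bigr] \;\le\; \frac{(\Delta+1)\epsilon^s}{4r\Delta\, w^{\Delta}},\]
and summing over $w = 1, \ldots, (r-1)\Delta$ stays strictly below $\epsilon^s/2$ because $\sum_{w \ge 1} w^{-\Delta}$ is bounded by a constant (and $w=0$ is vacuous since the only weight-zero edge set, the empty set, is trivially not dangerous). Subtracting this union bound from the $\ge \epsilon^s/2$ lower bound on the event $\{X \ge \epsilon^s N^{r-1}/2\}$ shows that with positive probability $T$ simultaneously delivers many edges in $G_{r-1}$ and meets every $Y_w$ bound, so such a $G_{r-1}$ exists. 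The only step requiring real care is the candidate count for dangerous sets: the extra factor $w^\Delta$ built into the target bound $M_w$ is exactly what absorbs the tail sum $\sum w^{-\Delta}$, so losing that factor prematurely in the counting would sink the argument.
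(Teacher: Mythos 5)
Your proposal is correct and follows essentially the same dependent random choice argument as the paper: the same sampling of $T$, the same convexity bound on $\mathbb{E}[X]$, the same candidate count for weight-$w$ edge sets, and the same Markov-plus-union-bound finish (the paper simply takes each failure probability to be $\epsilon^{-s}/(4r\Delta)$ uniformly and multiplies by the $\le r\Delta$ values of $w$, rather than exploiting the $w^{\Delta}$ slack as you do). One tiny caveat: for $\Delta=1$ the series $\sum_{w\ge 1} w^{-\Delta}$ is not bounded by an absolute constant, but your partial sum only runs to $(r-1)\Delta$ and carries a $1/r$ prefactor, so the bound still closes.
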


\begin{proof}
Let $\cal{C}$ be the complete $(r-1)$-uniform $(r-1)$-partite
hypergraph on the vertex sets $V_2, \cdots, V_r$. For any edge $e$
in $\cal{C}$, let $d(e)$ be the degree of $e$ in $G_r$, i.e., the
number of vertices in $V_1$ such that $e + v \in G_r$. Let $T$ be
a set of $s$ random vertices of $V_1$, chosen uniformly with
repetitions. Let $A$ be the set of edges in $\cal{C}$ which are
common neighbours of the vertices of $T$, i.e., an edge $e$ of
$\cal{C}$ is in $A$ if $e+v$ is an edge of $G_r$ for all $v \in
T$. Let $X$ denote the cardinality of $A$. We will show that with
positive probability, the set $A$ will be the edge set of a
hypergraph $G_{r-1}$ on vertex sets $V_2,\ldots,V_r$ with the
desired properties. By linearity of expectation and by convexity
of $f(z)=z^s$,
\begin{eqnarray*}
\mathbb{E}[X] & = & \sum_{e \in \cal{C}} \mathbb{P}[e \in A] =
\sum_{e \in \cal{C}} \left( \frac{d(e)}{N} \right)^s \\ & \geq &
\frac{N^{r-1} \left(\frac{\sum_{e \in \cal{C}} d(e)}{N^{r-1}}
\right)^s}{N^s} \geq \frac{N^{r-1} (\epsilon N)^s}{N^s} = \epsilon^s
N^{r-1}.
\end{eqnarray*}

Note that $X \leq N^{r-1}$ since $\cal{C}$ has $N^{r-1}$ edges. Letting $p$ denote the probability that $X \geq \mathbb{E}[X]/2$,
we have $$\mathbb{E}[X] \leq (1-p)\mathbb{E}[X]/2+pN^{r-1} \leq \mathbb{E}[X]/2+pN^{r-1}.$$
So the probability $p$ that $X \geq \mathbb{E}[X]/2 \geq \epsilon^sN^{r-1}/2$
satisfies $p \geq \frac{\mathbb{E}[X]}{2N^{r-1}} \geq \epsilon^s/2$.

The number of subsets $S$ of $V_2 \cup \ldots \cup V_r$ of size $w$ is ${(r-1)N \choose w}$.
For a given $w$-set $S$, the number of collections $\{e_1,\ldots,e_t\}$ of size $t$ with $|e_i|=r-1$,
and $e_i \subset S$ for $1 \leq i \leq t$
is ${{w \choose r-1} \choose t}$. Hence, summing over all nonnegative $t \leq \Delta$,
the number of sets of edges of $\cal{C}$
 with weight $w$ and size at most $\Delta$ is at most
\[\sum_{t=0}^{\Delta} {{w \choose r-1} \choose t}{(r-1)N \choose w} \leq w^{r\Delta}(rN)^w=w^{r\Delta}r^wN^w,\]

Let $Y_w$ denote the random variable counting the number of dangerous sets $S$ of edges of $G_{r-1}$ with weight $w$.
We next give an upper bound on $\mathbb{E}[Y_w]$. For a given set $S$ of edges of $\cal{C}$, the probability
that $S$ is a subset of edges of $G_{r-1}$ is $\left(\frac{|N(S)|}{N} \right)^s$, where $N(S)$ denotes the
set of vertices $v \in V_1$ with $v+e$ an edge of $G_{r}$ for all $e \in S$. So if $S$ satisfies $N(S)<\beta N$, then
the probability that $S$ is a subset of edges of $G_{r-1}$ is less than $\beta^s$. By linearity of expectation, we have
$\mathbb{E}[Y_w] < \beta^sw^{r\Delta}r^wN^w $.

Let $\alpha=4r\Delta\epsilon^{-s}$. Since $Y_w$ is a nonnegative random variable,
Markov's inequality implies that $\mathbb{P}\left(Y_w \geq \alpha \mathbb{E}[Y_w]\right) \leq \frac{1}{\alpha}$.
Hence, the probability that there is a nonnegative integer $w \leq (r-1)\Delta$ with
$Y_w \geq \alpha \beta^{s} w^{r\Delta}r^wN^w$
is at most $r\Delta/\alpha=\epsilon^s/4$.
Since the probability that $X \geq
\frac{\epsilon^s}{2} N^{r-1}$ is at least $\epsilon^s/2$, we can satisfy
the conditions of the lemma with probability at least $\epsilon^s/4$.
\end{proof}

By simply iterating the previous lemma $l-1$ times, we obtain the following corollary.

\begin{corollary}
Suppose $s,\Delta$ are positive integers, $\epsilon,\beta > 0$,  and $G_l = (V_1, \cdots, V_l; E_l)$ is an
$l$-uniform $l$-partite hypergraph with $|V_1| = |V_2| = \cdots =
|V_l| = N$ and at least $\epsilon N^l$ edges. Let $\delta_l=\epsilon$ and $\delta_{r-1}=\delta_{r}^s / 2$
for $2 \leq r \leq l$. Then, for $1 \leq r \leq l-1$, there are $r$-uniform $r$-partite hypergraphs
$G_{r}=(V_{l-r+1},\ldots,V_l,E_r)$
with the following properties:
\begin{enumerate}
\item $G_{r}$ has at least $\delta_{r}N^{r}$ edges for $1 \leq r \leq l$, and
\item for $2 \leq r \leq l$ and each nonnegative integer $w \leq (r-1)\Delta$,
there are at most $4r\Delta\delta_{r}^{-s}\beta^{s}w^{r\Delta}r^wN^w$ dangerous sets of $G_{r-1}$
with weight $w$, where a set $S$ of edges of $G_{r-1}$ is dangerous if $|S| \leq \Delta$
and the number of vertices $v \in V_{l-r+1}$ such that
for every edge $e \in S$, $e+v \in G_r$ is less than $\beta N$.
\end{enumerate}
\end{corollary}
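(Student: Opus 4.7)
The plan is a straightforward downward iteration: starting from the given $G_l$, I will apply Lemma 1 once per step to peel off the lowest-indexed remaining vertex class, so that after $l-r$ applications I arrive at the desired $G_r$. The recursive definition $\delta_{r-1} = \delta_r^s/2$ is designed precisely so that the density guarantee output by each invocation of Lemma 1 becomes exactly the input hypothesis for the next invocation.

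For the base case, setting $\delta_l = \epsilon$ makes property 1 hold at $r=l$ by assumption on $G_l$. Suppose inductively that for some $r$ with $2 \leq r \leq l$ I have already produced an $r$-uniform $r$-partite $G_r$ on $V_{l-r+1}, \ldots, V_l$ with at least $\delta_r N^r$ edges. I then apply Lemma 1 to $G_r$ with its parameter $\epsilon$ taken to be $\delta_r$, identifying the lemma's $V_1$ with $V_{l-r+1}$ and its $V_2, \ldots, V_r$ with $V_{l-r+2}, \ldots, V_l$. The lemma outputs an $(r-1)$-uniform $(r-1)$-partite hypergraph $G_{r-1}$ on $V_{l-r+2}, \ldots, V_l$ with at least $\delta_r^s N^{r-1}/2 = \delta_{r-1} N^{r-1}$ edges, which is property 1 for $G_{r-1}$; and for each nonnegative $w \leq (r-1)\Delta$ it bounds the number of weight-$w$ dangerous sets by $4r\Delta\,\delta_r^{-s}\beta^{s} w^{r\Delta} r^w N^w$, where the notion of ``dangerous'' matches that in the corollary once the lemma's distinguished class $V_1$ is replaced by $V_{l-r+1}$. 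This gives property 2 at level $r$.

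Iterating this from $r=l$ down to $r=2$ produces all of $G_{l-1}, G_{l-2}, \ldots, G_1$ with the required properties. Since $\delta_r > 0$ at every stage, the probabilistic guarantee of Lemma 1 (success probability at least $\delta_r^s/4 > 0$) ensures that a valid $G_{r-1}$ exists and can be fixed deterministically before moving to the next step. The only real content is the bookkeeping of indices and the verification that successive density and dangerous-set bounds chain correctly through the recursion; there is no substantive obstacle, since the corollary is literally the statement that Lemma 1 composes with itself $l-1$ times.
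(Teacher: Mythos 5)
Your proposal is correct and is exactly the argument the paper intends: the paper's entire ``proof'' of this corollary is the one-line remark that it follows by iterating Lemma 1 $l-1$ times, and your induction (applying Lemma 1 at level $r$ with its $\epsilon$ set to $\delta_r$ and its distinguished class $V_1$ identified with $V_{l-r+1}$, then fixing a successful outcome before descending) is the standard way to make that precise. The bookkeeping checks out: $\delta_{r-1}=\delta_r^s/2$ matches the lemma's density output and the dangerous-set bound transfers verbatim.
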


This is all the preparation we need before proving our main
contribution, Theorem 2. For the proof, we will use Corollary 1 and then show how to embed $F$ into $G$. The
latter part is closely related to the many embedding results
proven by Fox and Sudakov in \cite{FS07}. We will actually prove the following more precise version of
Theorem 2:

\begin{theorem}
Let $l \geq 3$, $F$ be an $l$-uniform $l$-partite hypergraph, on vertex sets
$W_1, \cdots, W_l$, with at most $n$ vertices and maximum degree
$\Delta$. Let $G_l$ be an $l$-uniform $l$-partite graph, on vertex
sets $V_1, \cdots, V_l$ with $|V_1| = \cdots = |V_l| = N$, with at
least $\epsilon N^l$ edges. Then, provided that $N \geq
\left(\epsilon/2\right)^{-(2l\Delta)^{l-1}} n$,
$G_l$ contains a copy of $F$.
\end{theorem}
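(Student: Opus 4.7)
The plan is to apply Corollary 1 to obtain a chain of auxiliary hypergraphs $G_{l-1}, G_{l-2}, \ldots, G_1$ (with $G_l$ the given hypergraph), and then embed $F$ into $G_l$ greedily, part by part in the reverse order $W_l, W_{l-1}, \ldots, W_1$, using the ``non-dangerous'' conclusion of the corollary to furnish many candidate images at each step.

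First, I invoke Corollary 1 with $s = 2l\Delta$ and $\beta$ chosen slightly larger than $n/N$. This yields $r$-uniform $r$-partite hypergraphs $G_r$ on $V_{l-r+1}, \ldots, V_l$ of density at least $\delta_r$, where $\delta_l = \epsilon$ and $\delta_{r-1} = \delta_r^s/2$, together with the explicit bound on the number of dangerous subsets at each level. Unwinding the recursion gives $\delta_1 \geq (\epsilon/2)^{s^{l-1}} = (\epsilon/2)^{(2l\Delta)^{l-1}}$, so the hypothesis $N \geq (\epsilon/2)^{-(2l\Delta)^{l-1}} n$ guarantees that $|E(G_1)| = \delta_1 N \geq n$. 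In particular, there is room to inject $W_l$ into the edge set of $G_1 \subseteq V_l$.

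I then build $\phi \colon V(F) \to V(G_l)$ stage by stage, where stage $k$ embeds $W_{l-k+1}$ into $V_{l-k+1}$. The guiding invariant is: after stage $k$, for every edge $e$ of $F$ the image $\sigma(e) := \phi(e \cap (W_{l-k+1} \cup \cdots \cup W_l))$ is an edge of $G_k$. The base case $k = 1$ is just the injection of $W_l$ into $E(G_1)$. For the inductive step, to embed a vertex $w \in W_{l-k}$ I form $S_w = \{\sigma(e) : e \in F,\ e \ni w\}$, which by the invariant is a set of at most $\Delta$ edges of $G_k$. If $S_w$ is not dangerous in $G_k$, then Corollary 1 supplies at least $\beta N$ vertices $v \in V_{l-k}$ with $\{v\} \cup \sigma(e) \in G_{k+1}$ for every $e \ni w$, and choosing any such $v$ not yet used by $\phi$ preserves the invariant at level $k+1$.

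The main obstacle is to ensure that $S_w$ is never dangerous and that the pool of admissible images is never exhausted. To handle this, I would strengthen the invariant to track, at every stage, the non-dangerousness of $S_{w'}$ for each not-yet-embedded $w'$ in the next part, and then at each vertex-embedding step bound the number of ``bad'' images for $\phi(w)$ — those creating a dangerous $S_{w'}$ for some future $w' \in W_{l-k-1}$ — by counting dangerous sets of weight $\le k\Delta$ via Corollary 1. Each dangerous $D$ of weight $w$ contributes at most $\mathrm{poly}(\Delta, l) \cdot N^{w-1}$ bad choices (one coordinate of freedom is consumed by fixing $\phi(w)$), and summing the corollary's bound over weights gives a total count of the form $\mathrm{poly}(\Delta,l) \cdot \delta_{k+1}^{-s} \beta^s \cdot N$, which is negligible compared to $\beta N$ once $\beta$ is taken polynomially smaller than the densities $\delta_r$. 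Combined with the at most $n$ already-used vertices in $V_{l-k}$, this leaves a nonempty set of admissible choices for $\phi(w)$ at every step, and iterating through all $l$ stages produces the desired copy of $F$ in $G_l$.
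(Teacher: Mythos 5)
Your skeleton matches the paper's: apply Corollary 1 with $s=2l\Delta$ to build the chain $G_{l-1},\dots,G_1$, then embed $F$ greedily in the order $W_l,W_{l-1},\dots,W_1$, maintaining the invariant that partial images of edges land in the appropriate $G_r$ and using non-dangerousness to guarantee at least $\beta N$ candidate images at each step. You also correctly isolate the crux: one must ensure that the set $S_w$ of partial edge-images is never dangerous when its turn comes.

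However, your proposed resolution of that crux has a genuine gap. When you embed $\phi(w)$ for $w\in W_{l-k}$, the set $S_{w'}$ of a future vertex $w'\in W_{l-k-1}$ is determined only after \emph{all} of $N(w')\cap W_{l-k}$ (up to $\Delta$ vertices) has been embedded, so ``choices of $\phi(w)$ that create a dangerous $S_{w'}$'' is not a well-defined event to avoid at that step. What you actually need to rule out is that the already-fixed portion of $\phi(N(w'))$ sits inside the unions of too many dangerous sets, and Corollary 1's \emph{global} count of dangerous sets of weight $m$ (of order $\delta^{-s}\beta^s N^m$) does not control this: all of those dangerous sets could cluster on extensions of a single partial image, in which case every completion is dangerous and the greedy step fails. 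Your count ``each dangerous $D$ of weight $w$ contributes at most $\mathrm{poly}(\Delta,l)\cdot N^{w-1}$ bad choices'' implicitly assumes the dangerous sets are spread evenly over the partial embeddings, which is unjustified. The paper closes exactly this hole with its auxiliary $l\Delta$-uniform ``bad hypergraph'' $B$ (all extensions of unions of dangerous sets to sets with $\Delta$ vertices per part) and a hierarchy of \emph{good} sets $U$, those contained in fewer than $(\beta/4l\Delta)^{l\Delta-|U|}\binom{N}{l\Delta-|U|}$ edges of $B$. This potential function propagates from the empty set (good by the global count) one vertex at a time — a good $U$ has at most $\frac{\beta N}{4l\Delta}$ bad extensions — and a good fully-embedded $f(N(w'))$ provably contains no dangerous union (Claim 1). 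Without this intermediate bookkeeping, or something equivalent to it, your induction cannot be pushed through, so the proof as proposed is incomplete at its central step.
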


\begin{proof}
We apply Corollary 1 with $s=2l\Delta$, $\delta_l=\epsilon$, $\delta_{i-1}=\delta_{i}^s/2$ for $2 \leq i \leq l$,
and $\beta=2\left(\epsilon/2\right)^{(2l\Delta)^{l-1}}$ to get hypergraphs $G_{l-1},\ldots,G_1$.
It is easy to check by induction on $i$ that $\delta_{l-i} = 2^{-(s^i-1)/(s-1)}\epsilon^{s^i}$,
so $$\delta_1 = 2^{-(s^{l-1}-1)/(s-1)}\epsilon^{s^{l-1}} \geq 2\left(\epsilon/2\right)^{(2l\Delta)^{l-1}}=\beta$$ and
$\delta_1N \geq \beta N \geq 2n$.

We now construct an $l\Delta$-uniform {\it bad hypergraph} $B$ with
vertex set $V_1 \cup \ldots \cup V_l$ where each edge of $B$ has
exactly $\Delta$ vertices in each $V_i$. A set $T \subset V_1 \cup
\ldots \cup V_l$ which contains exactly $\Delta$ vertices in each
$V_i$ is an edge of $B$ if and only if there is a dangerous set $S$
of edges of $G_r$ for some $r$, $1 \leq r \leq l-1$, such that the
union of the edges of $S$ is a subset of $T$. In other words, an
edge of $B$ is just an extension of the union of the edges of a
dangerous set. For a particular dangerous set $S$ of edges with
weight $w$ in some $G_r$, the number of edges of $B$ that are
extensions of the union of the edges in $S$ is at most
$N^{l\Delta-w}$ since there are at most $N$ ways to pick each of the
$l\Delta-w$ remaining vertices that make up an edge. Summing over
all $r$ and $w$, and using the fact that $l \geq 3$ and
$\delta_2^s=2\delta_1 \geq 2\beta$, the number of edges of $B$ is at
most
\begin{eqnarray*}
\sum_{r=2}^{l} \sum_{w=0}^{(l-1)\Delta} 4r\Delta\delta_{r}^{-s}\beta^{s}w^{r\Delta}r^wN^{l\Delta} & \leq & l^2\Delta \cdot
2l\Delta \beta^{s-1}(l\Delta)^{l\Delta}l^{l\Delta}N^{l\Delta} = 2l^3 \Delta^2 (l^2 \Delta)^{l\Delta} \beta^{l \Delta-1} \beta^{l \Delta} N^{l\Delta}
\\ & \leq & 2^{1+3(l \Delta)^2} \beta^{l \Delta-1} \beta^{l \Delta} N^{l\Delta}
\leq  2^{1+3(l \Delta)^2} 2^{(1-(2l\Delta)^{l-1})(l \Delta-1)}
\beta^{l \Delta} N^{l\Delta} \\ & \leq & 2^{-4(l \Delta)^2} \beta^{l
\Delta} N^{l\Delta} <\left(\frac{\beta}{4l\Delta}\right)^{l\Delta}{N
\choose  l\Delta}.
\end{eqnarray*}

Call a set $U \subset V_1 \cup \ldots \cup V_l$ with at most $\Delta$ vertices in each $V_i$ {\it bad}
if there are at least
\[\left(\frac{\beta}{4l\Delta}\right)^{l\Delta-|U|}{N \choose  l\Delta-|U|} \] edges of $B$ that contain $U$;
 otherwise call $U$ {\it good}.
Note that the above calculation on the number of edges of $B$ demonstrates that the empty set is good.
We next prove the following important claim.

\begin{claim} If $S$ is a dangerous set of edges in $G_r$ for some $r$, $1 \leq r \leq l-1$, and $U$ is a good set,
then the union of the edges in $S$ is not a subset of $U$.
\end{claim}
\begin{proof} Suppose for contradiction that the union of the edges in $S$ is a subset of $U$. The number of extensions of $U$ to a set which contains exactly $\Delta$ vertices in each $V_i$ is
\[\prod_{i=1}^l {N-|V_i \cap U| \choose \Delta-|V_i \cap U|}\]
since we can pick for each $i$ any $\Delta-|V_i \cap U|$ vertices of $V_i \setminus U$ to extend $U$. By definition,
all of these sets are edges in $B$. Using the simple fact that if $x_1,\ldots,x_l$ are
nonnegative integers then $\prod_{i=1}^l x_i! \leq (\sum_{i=1}^l x_i)!$, it is straightforward to check that
\begin{eqnarray*}
\prod_{i=1}^l {N-|V_i \cap U| \choose \Delta-|V_i \cap U|} & \geq & (N/2)^{l\Delta-|U|} \prod_{i=1}^l
\left( \Delta-|V_i \cap U|\right)!^{-1} \geq \left(\frac{1}{2}\right)^{l\Delta-|U|}{N \choose  l\Delta-|U|}  \\ & \geq &
\left(\frac{\beta}{4l\Delta}\right)^{l\Delta-|U|}{N \choose  l\Delta-|U|},
\end{eqnarray*}
which contradicts $U$ being good.
\end{proof}

Given a good set $U$ with $|V_i \cap
U|<\Delta$ and $v \in V_i \setminus U$, we say $v$ is {\it bad with respect to $U$} if $U \cup
\{v\}$ is bad. Let $B_U$ denote the set of vertices that are bad
with respect to $U$. We will show that for $U$ good we have $|B_U|
\leq \frac{\beta N}{4 l\Delta}$. Indeed, suppose $|B_U| >
\frac{\beta N}{4l\Delta}$. Then the number of edges of $B$ containing
$U$ is at least
\[\frac{|B_U|}{l\Delta - |U|} \left(\frac{\beta}{4l\Delta}
\right)^{l\Delta - |U| - 1} \binom{N}{l\Delta - |U| - 1} >
\left(\frac{\beta}{4l\Delta}\right)^{l\Delta - |U|}
\binom{N}{l\Delta-|U|},\] contradicting the fact that $U$ is good.

Fix a labeling $\{v_1, \cdots, v_n\}$ of the vertices of $F$ such
that all vertices in $W_{i+1}$ precede all those in $W_i$ for all
$i = 1, \cdots, l-1$. For each $i$, let $L_i = \{v_1, \cdots,
v_i\}$. For each vertex $v_h$, the {\it trace neighbourhood}
$N(v_h)$ is the set of vertices $v_m$ with $m<h$ that are in an
edge of $F$ with $v_m$. Note that $N(v_h)$ contains at most
$\Delta$ vertices in each $W_r$ since $F$ has maximum degree
$\Delta$. We will find an embedding $f$ of the vertices of $F$
such that $f(W_r) \subset V_r$ for $1 \leq r \leq l$ and for each
$i \leq lN$,
\begin{enumerate}
\item $f(N(v) \cap L_i)$ is good for each vertex $v$ of $F$, and
\item $f(e \cap L_i)$ is an edge of $G_{r}$ for each edge $e$ of $F$, where $r=|e \cap L_i|$.
\end{enumerate}
The proof will be complete once we find such an embedding $f$
since, for each edge $e$ of $F$, $f(e \cap L_n)=f(e)$ is an edge
of $G_l$, so $f$ provides an embedding of $F$ in $G_l$. The
embedding will be constructed one vertex at a time, in increasing
order of subscript, so the proof will be by induction on $i$. As
noted earlier, the empty set is good, so our base case $i=0$ is
satisfied.

Suppose then that at step $i$, we have found an embedding $f$ of $v_1,\ldots,v_i$ such that
\begin{enumerate}
\item for each vertex $v$ of $F$, $f(N(v) \cap L_i)$ is good, and
\item for each edge $e$ of $F$, $f(e \cap L_i)$ is an edge of $G_{r}$, where $r=|e \cap L_i|$.
\end{enumerate}

Let $j$ be such that $v_{i+1} \in W_j$. Let $e_1,\ldots,e_d$ denote the edges of $F$ that contain $v_{i+1}$ and
$e_1',\ldots,e_d'$ denote the truncations of $e_1,\ldots,e_d$ by deleting all $j$ vertices from each $e_t$ that are in
some $W_h$ with $h \leq j$. Each $e_t'$ consists of one vertex from each $W_h$ with $h>j$. Also, $d \leq \Delta$ since
$F$ has maximum degree $\Delta$.

Since $F$ has maximum degree $\Delta$, there are less than
$l\Delta$ vertices $v$ for which $v_{i+1} \in N(v)$. For each such $v$,
$f(N(v) \cap L_i)$ is good, so there are at most $\frac{\beta}{4l\Delta}N$ vertices $w$ in $V_j$ for which
$f(N(v) \cap L_i) \cup w$ is bad. Adding over all such $v$, we conclude
that there are at most $\frac{\beta}{4} N$ bad vertices in all
associated with $v_{i+1}$.

Suppose we are still embedding vertices of $W_l$ in $V_l$. Since the edge set of $G_1$
is just a subset of $V_l$ whose size by Corollary 1 is at least $\delta_1N=\beta N$,
 then we can choose any of these at least $\beta N$ vertices other than $f(v_1),\ldots,f(v_i)$ for $f(v_{i+1})$ to satisfy
 the second of the two desired properties for $f(v_{i+1})$. We see that
 there are at least $\beta N-i-\frac{\beta}{4}N > \frac{3\beta N}{4}-n>0$ vertices to choose from
for $f(v_{i+1})$ to satisfy both of the desired properties.

If, now, we have chosen all of the vertices in $W_l, \cdots,
W_{j+1}$ and we are trying to embed vertex $v_{i+1}$ in $W_{j}$ (we
may have already embedded other vertices in $W_j$), we can do so. To
see this, by the induction hypothesis, $f(N(v_{i+1}) \cap
L_{i})=f(N(v_{i+1}))=\bigcup_{t=1}^d f(e_t')$ is good. By Claim 1,
this implies that the set $\{f(e_1'),\ldots,f(e_d')\}$ of edges of
$G_{l-j}$ is not dangerous, i.e., there are at least $\beta N$
vertices $v \in V_j$ such that $f(e_t') \cup v$ is an edge of
$G_{l-j+1}$ for $1 \leq t \leq d$. Therefore, since there are at
most $\frac{\beta}{4} N$ bad vertices associated with $v_{i+1}$ and
we have already chosen $f(v_1),\ldots,f(v_i)$, we have at least
$\frac{3}{4} \beta N - i>\frac{3}{4}\beta N-n>0$ choices for
$f(v_{i+1})$, which completes the proof.
\end{proof}
\section{The Ramsey theorem}

We are now ready to prove Theorem 1 in the following form:

\begin{theorem}
Let $\Delta$ and $k \geq 3$ be positive integers. Then the Ramsey number
of any $k$-uniform hypergraph $H$ with $n$ vertices and maximum
degree $\Delta$ satisfies
\[r(H) \leq r_k(k \Delta)^{(2k\Delta^2)^{k \Delta}} n,\]
where $r_k(l) = r(K_l^{(k)})$.
\end{theorem}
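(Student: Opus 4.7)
The plan is to reduce Theorem 4 to the $l$-uniform $l$-partite Tur\'an-type Theorem 3 at $l = k\Delta$, via a transversal-averaging argument of Kostochka and R\"odl. The idea is to convert a $2$-coloring of $K_N^{(k)}$ into a single dense $k\Delta$-uniform $k\Delta$-partite hypergraph $G$, to recode $H$ as a $k\Delta$-uniform $k\Delta$-partite hypergraph $F$ of maximum degree $\Delta$ and comparable order, and then to apply Theorem 3 once; the resulting embedding of $F$ into $G$ will project to a monochromatic copy of $H$ inside $K_N^{(k)}$.

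For the host $G$, set $M = r_k(k\Delta)$ and partition the vertex set of $K_N^{(k)}$ into $M$ equal parts $U_1, \dots, U_M$ of size $N/M$. For each of the $(N/M)^M$ transversals $(u_1, \dots, u_M) \in U_1 \times \cdots \times U_M$, the induced $K_M^{(k)}$ on $\{u_1, \dots, u_M\}$ contains a monochromatic $K_{k\Delta}^{(k)}$; I label that transversal by the pair (index set $J \subseteq [M]$ with $|J| = k\Delta$, color $c$) of this clique, giving at most $2\binom{M}{k\Delta}$ possible labels. Pigeonhole yields a single label $(J, c)$ attained by at least a $(2\binom{M}{k\Delta})^{-1}$ fraction of all transversals. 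I then let $G$ be the $k\Delta$-uniform $k\Delta$-partite hypergraph on $\{U_j : j \in J\}$ whose edges are exactly the $J$-projections of these transversals; since each edge of $G$ lifts to $(N/M)^{M-k\Delta}$ transversals, $G$ has edge density $\epsilon \ge (2\binom{M}{k\Delta})^{-1} \ge (2M^{k\Delta})^{-1}$, and every edge of $G$ is by construction a $c$-monochromatic $K_{k\Delta}^{(k)}$ in the original coloring.

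For the target $F$, note that the primal graph of $H$ (two vertices adjacent iff they share an edge) has maximum degree at most $(k-1)\Delta$ and is therefore $k\Delta$-colorable, giving a partition $V(H) = W_1 \cup \cdots \cup W_{k\Delta}$ in which every edge of $H$ meets each class in at most one vertex. For each edge $e \in E(H)$, I introduce $k\Delta - k$ fresh dummy vertices---one in each $W_i$ not already represented in $e$---and let $e^*$ be the $k\Delta$-uniform edge $e \cup \{\text{dummies}\}$. The resulting $F$ is $k\Delta$-partite, has $|V(F)| \le n + (k\Delta - k)|E(H)| \le \Delta^2 n$ vertices, and has maximum degree $\Delta$, since each original vertex retains its $H$-degree while each dummy lies in only one edge of $F$.

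It remains to apply Theorem 3 with $l = k\Delta$ to $(F, G)$ and verify the size requirement $N/M \ge (\epsilon/2)^{-(2k\Delta^2)^{k\Delta-1}} |V(F)|$. Plugging in $\epsilon \ge (2M^{k\Delta})^{-1}$ and $|V(F)| \le \Delta^2 n$, the right-hand side is of order $M^{k\Delta \cdot (2k\Delta^2)^{k\Delta - 1}}$ times polynomial factors in $\Delta$, which is comfortably dominated by $M^{(2k\Delta^2)^{k\Delta} - 1} n$; this is precisely why $N = M^{(2k\Delta^2)^{k\Delta}} n$ is chosen with $M = r_k(k\Delta)$. Any embedding of $F$ into $G$ then maps each edge of $H \subseteq F$ to a $k$-subset of a $c$-monochromatic $K_{k\Delta}^{(k)}$, and hence to a $c$-colored edge of $K_N^{(k)}$, producing the desired monochromatic copy of $H$. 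The conceptual heart of the argument---and the only step that uses more than Ramsey's theorem and Theorem 3 as black boxes---is the pigeonhole that converts pointwise Ramsey information (one monochromatic $K_{k\Delta}^{(k)}$ per transversal) into a single globally dense $k\Delta$-partite target $G$; the remaining work is the dummy-vertex construction of $F$ and routine arithmetic to match the advertised tower-type bound.
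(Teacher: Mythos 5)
Your proposal is correct and follows the same overall strategy as the paper: reduce to the $l$-partite Tur\'an-type Theorem 3 via the Kostochka--R\"odl idea, using the identical dummy-vertex construction of $F$ (with the same bound $|V(F)|\leq \Delta^2 n$ and maximum degree $\Delta$) and the same final application of Theorem 3. The one genuine difference is how you manufacture the dense $l$-uniform $l$-partite host whose edges are monochromatic cliques. The paper first counts monochromatic $K_l^{(k)}$'s globally (at least $N^l/r_k(l)^l$ of them, by averaging over $r_k(l)$-sets), passes to the majority colour to form the $l$-uniform clique hypergraph $G^{(l)}$, and only then randomly equipartitions the vertex set into $l$ parts, computing the expected number of transversal edges to get density $l!/(2r_k(l)^l)$. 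You instead partition into $M=r_k(k\Delta)$ parts first, assign each transversal the label (position set, colour) of a monochromatic clique it contains, and pigeonhole on the $2\binom{M}{k\Delta}$ labels; projecting onto the popular positions $J$ gives density at least $(2\binom{M}{k\Delta})^{-1}$ directly, with the colour fixed for free. Your counting is sound (each $J$-projection accounts for at most $(N/M)^{M-k\Delta}$ transversals, which gives the lower bound on the edge count in the right direction), and your route avoids both the majority-colour step and the factorial estimates of the random-partition computation, at the cost of a slightly worse but immaterial density constant. Your choice $l=k\Delta$ rather than the paper's $l=(k-1)\Delta+1$ is also fine: the primal graph of $H$ has maximum degree at most $(k-1)\Delta$, so it is $k\Delta$-colourable, and the looser choice in fact matches the exponent $(2k\Delta^2)^{k\Delta}$ in the stated bound directly. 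The closing arithmetic has ample slack, as you note.
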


\begin{proof}
We use the argument of Kostochka and R\"odl \cite{KR06} together with Theorem 3.
Let $l = (k-1) \Delta + 1$. Suppose we have a red-blue colouring
of the complete $k$-uniform hypergraph on $N$ vertices. Let $G$ be
the hypergraph consisting of all the red edges and let $r_k(l)$ be
the Ramsey number of the hypergraph $K_l^{(k)}$. Then, in each
subset of the vertices of size $r_k(l)$, there is at least one
monochromatic $K_l^{(k)}$. Counting over all such sets and
dividing out by possible multiple counts we see that we have at
least
\[\frac{\binom{N}{r_k(l)}}{\binom{N-l}{r_k(l) - l}} \geq \frac{N^l}{r_k(l)^l}\]
monochromatic $K_l^{(k)}$. Therefore, either G or its complement
$\overline{G}$ contains at least $N^l/2 r_k(l)^l$ cliques
$K_l^{(k)}$. We will suppose that it is $G$.

Now we pass instead to considering the $l$-uniform hypergraph
$G^{(l)}$, the edges of which are exactly those $l$-tuples which
form complete $K_l^{(k)}$ in $G$. This hypergraph has at least
$N^l/2 r_k(l)^l$ edges. Partition its vertex set randomly into $l$
parts $V_1, \cdots, V_l$ of equal size $N/l$. The total number of
partitions is $\frac{N!}{(N/l)!^l}$ and, for any given edge $e$,
there are
 $l! \frac{(N-l)!}{(N/l-1)!^l}$ partitions such that
each vertex of this edge is in a different part of the partition.
Therefore, the expected number of edges with one vertex in each
set of the random partition is at least
\[e(G^{(l)}) \left(  l! \frac{(N-l)!}{(N/l-1)!^l} \bigg/
\frac{N!}{(N/l)!^l} \right) \geq \frac{N^l}{2 r_k(l)^l}\cdot l! \frac{(N-l)!}{(N/l-1)!^l}
\frac{(N/l)!^l}{N!} \geq \frac{N^l}{2 r_k(l)^l}
\frac{l!}{l^l}= \frac{l!}{2r_k(l)^l}\left(\frac{N}{l} \right)^l.\]
Now choose such a partition and let $\hat{G}^{(l)}$ be the
$l$-uniform $l$-partite subhypergraph of $G^{(l)}$ consisting of
those edges of $G^{(l)}$ which have one edge in each partite set.
Note that $\hat{G}^{(l)}$ has $N/l$ vertices in each part and at least $\epsilon \left(\frac{N}{l}\right)^l$ edges, where
$\epsilon=\frac{l!}{2r_k(l)^l}$.

Now we extend hypergraph $H$ to an $l$-uniform $l$-partite
hypergraph $H^{(l)}$. We first note that the vertices of $H$ can be
partitioned into $l$ subsets $A_1,\ldots,A_l$ such that each edge of
$H$ has at most one vertex in each part. This is equivalent to
saying that the graph $H'$ with the same vertex set as $H$ and with
two vertices adjacent if they lie in an edge of $H$ has chromatic
number at most $l$. Since $H'$ has maximum degree at most
$(k-1)\Delta$, it has chromatic number at most $(k-1)\Delta+1=l$.
For each edge $e$ of $H$, we add one auxiliary vertex to each $A_i$
which is disjoint from $e$ (in total $l-k$ vertices). Note that the
maximum degree of $H^{(l)}$ remains $\Delta$. The total number of
auxiliary vertices added is at most $\frac{\Delta n}{k} \cdot (l-k)
< \Delta(\Delta-1)n$ since there are $l-k$ auxiliary vertices for
each edge and the total number of edges of $H$ is at most
$\frac{\Delta n}{k}$. Hence, $H^{(l)}$ has less than $\Delta^2 n$
vertices.

Applying Theorem 3 with $F=H^{(l)}$, $G_l=\hat{G}^{(l)}$, and $\epsilon=\frac{l!}{2r_k(l)^l}$
we see that,
provided
\[\frac{N}{l} \geq \left(\epsilon/2\right)^{-(2l\Delta)^{l-1}} \cdot \Delta^2 n,\]
$\hat{G}^{(l)}$ contains a copy of $H^{(l)}$. But now, by the
construction of $H^{(l)}$, this implies that every edge in $H$ is
contained inside an edge of $\hat{G}^{(l)}$. But $\hat{G}^{(l)}$
was chosen in such a way that every $k$-tuple within any edge of
$\hat{G}^{(l)}$ is an edge in $G$. Therefore $G$ contains a copy
of $H$, so we are done.
\end{proof}

As mentioned in the introduction, the proof of Theorem 1
presented here extends in a straightforward manner to the following
multicolour generalisation.

\begin{theorem}
For all positive integers $\Delta$, $k$, and $q$, there exists a
constant $c(\Delta,k,q)$ such that, if $H_1,\ldots,H_q$ are
$k$-uniform hypergraphs each with $n$ vertices and maximum degree
$\Delta$, then
\[r(H_1,\ldots,H_q) \leq c(\Delta, k,q) n.\]
\end{theorem}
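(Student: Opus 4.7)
The plan is to follow the argument of Theorem 4 essentially verbatim, with only the numerical parameters adjusted to accommodate $q$ colors. Fix $\Delta$, $k \geq 3$, $q$, and set $l = (k-1)\Delta + 1$. Let $r_k(l;q)$ denote the $q$-color Ramsey number $r(K_l^{(k)}, \ldots, K_l^{(k)})$, which is finite by the classical multicolor Ramsey theorem of Ramsey--Erd\H{o}s--Rado. For $k=2$ one should instead invoke the bipartite multicolor results cited in the introduction; here I focus on the $k \geq 3$ case.

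Given a $q$-coloring of the edges of $K_N^{(k)}$, I first double-count monochromatic copies of $K_l^{(k)}$: every $r_k(l;q)$-subset of vertices contains at least one such copy, so the total number of monochromatic $K_l^{(k)}$ is at least $\binom{N}{r_k(l;q)} / \binom{N-l}{r_k(l;q)-l} \geq N^l / r_k(l;q)^l$. By pigeonhole on the $q$ colors, there is some color $i$ whose edges form a $k$-uniform hypergraph $G$ containing at least $N^l / (q\, r_k(l;q)^l)$ cliques $K_l^{(k)}$. Let $G^{(l)}$ be the $l$-uniform hypergraph whose edges are these cliques and partition its vertex set randomly into $l$ equal parts $V_1, \ldots, V_l$; exactly as in the proof of Theorem 4, the expected fraction of edges of $G^{(l)}$ that become transversal is at least $l!/l^l$, yielding an $l$-uniform $l$-partite subhypergraph $\hat{G}^{(l)}$ with parts of size $N/l$ and at least $\epsilon (N/l)^l$ edges, where
\[\epsilon = \frac{l!}{q\, r_k(l;q)^l}.\]

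Next I extend the target hypergraph $H_i$ to an $l$-uniform $l$-partite hypergraph $H_i^{(l)}$, using that the auxiliary graph on $V(H_i)$ (whose edges are pairs lying in a common edge of $H_i$) has maximum degree at most $(k-1)\Delta$ and hence chromatic number at most $l$. Adding $l-k$ auxiliary vertices per edge produces $H_i^{(l)}$ on fewer than $\Delta^2 n$ vertices, with maximum degree still $\Delta$. Applying Theorem 3 with $F = H_i^{(l)}$ and $G_l = \hat{G}^{(l)}$ yields a copy of $H_i^{(l)}$, and hence of $H_i$ in color $i$, provided
\[\frac{N}{l} \geq \Bigl(\frac{\epsilon}{2}\Bigr)^{-(2l\Delta)^{l-1}} \Delta^2 n.\]
Taking $c(\Delta,k,q) = l \Delta^2 (\epsilon/2)^{-(2l\Delta)^{l-1}}$ finishes the proof.

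There is no genuine obstacle: the dependent-random-choice machinery behind Lemma 1, Corollary 1, and Theorem 3 is completely color-blind, so all that changes in the reduction is the replacement of the $2$-color Ramsey number $r_k(l)$ by $r_k(l;q)$ and of the factor $1/2$ in the pigeonhole step by $1/q$. The only mildly delicate point is to apply the pigeonhole \emph{before} selecting which $H_i$ to embed, so that the color index $i$ provided by the pigeonhole step determines which target hypergraph $H_i$ is embedded into the monochromatic clique structure.
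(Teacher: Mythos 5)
Your proposal is correct and follows exactly the route the paper intends: the paper's own proof of this theorem consists of the single observation that one reruns the proof of Theorem 4 with $r_k(l)$ replaced by the $q$-colour Ramsey number $r_k(l;q)$ (bounded via Erd\H{o}s--Rado), and your pigeonhole over the $q$ colours, the resulting $\epsilon = l!/(q\,r_k(l;q)^l)$, and the remark that the majority colour $i$ dictates which $H_i$ to embed are precisely the adjustments needed.
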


The only difference in the proof is in Theorem 4, where we replace $r_k(l)$ by $r_k(l;q)$,
the $q$-colour Ramsey number for the complete $k$-uniform hypergraph on $l$ vertices.
Erd\H{o}s and Rado \cite{ER52} showed that $r_k(l;q) \leq t_k(cl)$, where the constant $c$ depends on $k$ and $q$.
We therefore may take $c(\Delta,k,q) \leq t_k(c\Delta)$ for $k \geq 4$
and $c(\Delta,3,q) \leq t_3(c \Delta \log \Delta)$, where the constant $c$ depends on $k$ and $q$.

\vspace{0.2cm}
\noindent {\bf Remark:} The {\it strong chromatic number} of a hypergraph $H$ is the minimum number of colors required to colour the vertices of
$H$ so that each edge of $H$ has no repeated colour. The proof of Theorem 4 demonstrates that if $H$ is a $k$-uniform hypergraph
with $n$ vertices, maximum degree $\Delta$, and strong chromatic number $l$, then the $q$-colour Ramsey number of $H$ satisfies
\[r(H, \cdots,H) \leq r_k(l;q)^{(2l\Delta)^{l}}.\]
Indeed, in the proof of Theorem 4, we only used the fact that the vertices of $H$ can be partitioned into $l$ parts such that
every edge has at most one vertex in each part.

\section{Lower bound construction}

The following theorem demonstrates that our upper bound for hypergraph Ramsey numbers proved in the previous section
is  in some cases close to best possible.

\begin{theorem}
There is $c>0$ such that for each sufficiently large $\Delta$, there is a $3$-uniform hypergraph $H$
with maximum degree at most $\Delta$ for which the $4$-colour Ramsey number of $H$ satisfies
$$r(H,H,H,H) \geq 2^{2^{c\Delta}}n,$$ where $n$ is the number of vertices of $H$.
\end{theorem}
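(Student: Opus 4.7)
The plan is to combine the Graham-R\"odl-Ruci\'nski (GRR) bounded-degree lower bound for $2$-colour graph Ramsey numbers with an Erd\H{o}s-Hajnal style step-up that lifts a $2$-colouring of pairs to a $4$-colouring of triples. First I would invoke GRR: for every sufficiently large $\Delta$, there exist a bipartite graph $G$ on $n_0$ vertices with maximum degree $\Delta$ and a $2$-colouring $\chi : \binom{[M]}{2} \to \{0,1\}$ with $M \geq 2^{c_0\Delta}n_0$ containing no monochromatic copy of $G$. Then one ``exponential step up'' is enough to go from $M$ to $N = 2^M = 2^{2^{c_0 \Delta} n_0}$, which is the form we need.

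Next I would step up the colouring. Identify $[N]$ with $\{0,1\}^M$ and, for distinct $x,y$, let $\delta(x,y)$ be the largest coordinate at which they differ; for any triple $\{x,y,z\}$ the three pairwise $\delta$-values satisfy the ``ultrametric'' identity that the largest value $t$ is attained exactly twice, while the third value $b < t$. Assign the triple one of four colours, coded by $(\chi(\{t,b\}), \pi) \in \{0,1\}^2$, where the extra bit $\pi$ records which of the three vertices is the distinguished ``peak'' determined by $t$ and $b$. The target hypergraph $H$ will be built from $G$ by attaching to each edge $\{a,b\} \in E(G)$ a private auxiliary vertex $w_{ab}$ and taking the triples $\{a,b,w_{ab}\}$ as the edges of $H$; then the maximum degree of $H$ is at most $\Delta$ (the original vertices keep their $G$-degree and the witnesses have degree $1$) and $|V(H)| = n_0 + |E(G)| \leq \Delta n_0 =: n$, so $N \geq 2^{2^{c\Delta}} n$ for some $c < c_0$ once $\Delta$ is large.

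The heart of the argument is to show that a monochromatic copy of $H$ in the $4$-colouring above would force a monochromatic copy of $G$ in $\chi$, contradicting GRR. Each embedded edge $\{f(a), f(b), f(w_{ab})\}$ carries a well-defined pair of differences $\{t_{ab}, b_{ab}\} \subset [M]$, all sharing the same value of $\chi$ and the same parity bit $\pi$. By ordering the vertices of $G$ and using the parity data to identify the roles of $f(a), f(b), f(w_{ab})$ inside each triple, I would extract from the family $\{\{t_{ab}, b_{ab}\}\}_{\{a,b\} \in E(G)}$ a faithful monochromatic copy of $G$ inside $[M]$. The main obstacle is precisely this final combinatorial extraction: one must argue that the difference pairs arising from distinct edges of $H$ are arranged coherently, which requires a monotone-subsequence and pigeonhole argument on the coordinates of the witnesses $f(w_{ab})$, in the spirit of the classical Erd\H{o}s-Hajnal step-up for $r(K_l^{(3)})$ but refined to exploit the bounded-degree structure of $G$.
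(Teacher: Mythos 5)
There is a genuine gap, and it sits exactly where you flag ``the main obstacle'': the final extraction of a monochromatic copy of $G$ in the base colouring does not work, and I do not believe it can be made to work for your choice of $H$. For an edge $\{a,b\}$ of $G$ with its private witness $w_{ab}$, sort the three images and observe that the $\delta$-pair of the triple $\{f(a),f(b),f(w_{ab})\}$ always has the form $\bigl\{\delta(f(a),f(b)),\,W_{ab}\bigr\}$, where $W_{ab}$ is a second coordinate determined by where the degree-one witness lands. So a monochromatic copy of $H$ only tells you that the $|E(G)|$ pairs $\{\delta(f(a),f(b)),W_{ab}\}$ are all red (say). These pairs are indexed by \emph{edges} of $G$: the element $\delta(f(a),f(b))$ is attached to the edge $\{a,b\}$, not to a vertex, and the $W_{ab}$ are essentially free parameters chosen by the adversary, one per edge. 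There is no candidate vertex map $g\colon V(G)\to[M]$ for which these pairs become $\{g(a),g(b)\}$, so no monochromatic copy of $G$ can be read off, and no monotone-subsequence or pigeonhole refinement repairs this, because the incoherence is structural rather than an ordering issue. This is why the step-up machinery is normally used to produce monochromatic \emph{cliques} in the base colouring: only when all (or a carefully designed family of) triples on a vertex set are monochromatic do the $\delta$-values acquire the vertex-like coherence you need.

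The paper's proof accepts this and chooses its ingredients accordingly. The base is not the Graham--R\"odl--Ruci\'nski sparse lower bound but Erd\H{o}s's bound $r(K_{n/2})>2^{n/4}$: a red/blue colouring of $K_m$, $m=\lceil 2^{n/4}\rceil$, with no monochromatic $K_{n/2}$. The target hypergraph is $H$ on $\{v_1,\dots,v_n\}$ with edges $\{v_i,v_{i+1},v_j\}$ for all $i,j$ (indices cyclic), which has maximum degree $\Delta\le 3n$; after one step-up to $T=\{0,1\}^m$ with the four colours $(\mbox{colour of }\{\delta_1,\delta_2\},\ \mbox{sign of }\delta_1-\delta_2)$, the specific edge set of $H$ --- every consecutive pair $\{v_i,v_{i+1}\}$ together with \emph{every} other vertex --- forces the largest consecutive differences $\delta_{\phi(j_1)-1}<\cdots<\delta_{\phi(j_{n/2})-1}$ to be pairwise red, i.e.\ a monochromatic $K_{n/2}$ in the base, a contradiction. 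Since $\Delta=\Theta(n)$ and $|T|=2^m\ge 2^{2^{n/4}}$, this gives $r(H,H,H,H)\ge 2^{2^{c\Delta}}n$. If you want to keep your outline, you must replace your expanded sparse graph by a hypergraph whose monochromatic copies force a \emph{clique} in the base colouring, and accordingly replace the GRR input by the Erd\H{o}s clique bound with $\Delta$ comparable to the clique size.
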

\begin{proof}
Our proof uses the same $4$-edge-colouring of the complete $3$-uniform hypergraph
that was constructed by Erd\H{o}s and Hajnal (see, e.g., \cite{GRS80}). Not only does this colouring have no large monochromatic complete $3$-uniform
hypergraph, but we show it also does not have any monochromatic copies of a much sparser
$3$-uniform hypergraph $H$.

Let $n \geq 4$ be even, $m = \lceil 2^{n/4} \rceil$, and suppose the edges of the complete graph $K_m$ are coloured red or blue in
such a way that neither colour contains a monochromatic copy of
the graph $K_{n/2}$. Such an edge-colouring exists by the lower bound of Erd\H{o}s (see \cite{GRS80}) on the Ramsey number
of the complete graph.

Let $V = \{v_1, \cdots, v_n\}$ be a set of vertices and let $H$ be
the $3$-uniform hypergraph on $V$ whose edge set is given by
$\{v_i, v_{i+1}, v_j\}$ for all $1 \leq i, j \leq n$. (Note that
when $i = n$, we consider $i+1$ to be equal to 1.) It is straightforward to check that every vertex in $H$
has degree $\Delta \leq 3n$.

We are going to define a $4$-colouring of the complete $3$-uniform
hypergraph on the set
\[T = \{(\gamma_1, \cdots, \gamma_m) : \gamma_i = 0 \mbox{ or }
1\}\] in such a way that there is no monochromatic copy of $H$.
Note that then we will be done, since $T$ has size $2^m \geq
2^{2^{n/4}}$ while $H$ has maximum degree $\Delta \leq 3n$.

To define our colouring, we need some definitions:

If $\epsilon = (\gamma_1, \cdots, \gamma_m)$, $\epsilon' =
(\gamma'_1, \cdots, \gamma'_m)$ and $\epsilon \neq \epsilon'$,
define
\[\delta(\epsilon, \epsilon') = \max\{i : \gamma_i \neq
\gamma'_i\},\] that is, $\delta(\epsilon, \epsilon')$ is the
largest coordinate at which they differ. We can now define an
ordering on $T$ by
\[\epsilon < \epsilon' \mbox{ if } \gamma_i = 0, \gamma'_i = 1,\]
\[\epsilon' < \epsilon \mbox{ if } \gamma_i = 1, \gamma'_i = 0.\]
Another way of looking at this ordering is to assign to each
$\epsilon$ the number $b(\epsilon) = \sum_{i=1}^m \gamma_i
2^{i-1}$. The ordering then says simply that $\epsilon <
\epsilon'$ iff $b(\epsilon) < b(\epsilon')$.

It is important to note the following two properties of the
function $\delta$:\\

(a) if $\epsilon_1 < \epsilon_2 < \epsilon_3$, then
$\delta(\epsilon_1, \epsilon_2) \neq \delta(\epsilon_2,
\epsilon_3)$;

(b) if $\epsilon_1 < \epsilon_2 < \cdots < \epsilon_r$, then
$\delta (\epsilon_1, \epsilon_r) = \max_{1 \leq i \leq r-1}
\delta(\epsilon_i, \epsilon_{i+1})$. \\

Now we are ready to define our colouring of the complete
$3$-uniform hypergraph $\tau$ on vertex set $T$. To begin, suppose
that $\{\epsilon_1, \epsilon_2, \epsilon_3\}$ with $\epsilon_1 <
\epsilon_2 < \epsilon_3$ is an edge in $\tau$. Write $\delta_1 =
\delta(\epsilon_1, \epsilon_2), \delta_2 = \delta(\epsilon_2,
\epsilon_3)$. Then we colour as follows:

\hspace{5.5cm}$C_1$, if $\{\delta_1, \delta_2\}$ is red and $\delta_1 <
\delta_2$;

\hspace{5.5cm}$C_2$, if $\{\delta_1, \delta_2\}$ is red and $\delta_1 >
\delta_2$;

\hspace{5.5cm}$C_3$, if $\{\delta_1, \delta_2\}$ is blue and $\delta_1 <
\delta_2$;

\hspace{5.5cm}$C_4$, if $\{\delta_1, \delta_2\}$ is blue and $\delta_1 >
\delta_2$.
\vspace{0.1cm}

Now, let $S = \{\epsilon_1, \cdots, \epsilon_n\}_{<}$ be an
ordered $n$-tuple within $\tau$ and suppose that there is a copy
of $H$ on $S$ which is coloured by $C_1$. Suppose that the natural
cycle $\{v_1, \cdots, v_n\}$ associated with $H$ occurs as
$\{\epsilon_{\pi(1)}, \cdots, \epsilon_{\pi(n)}\}$ where $\pi$ is
a permutation of $1, \cdots, n$. For each $i$, $1 \leq i \leq n$, let $\phi(i)=\max(\pi(i),\pi(i+1))$ and
$\psi(i)=\min(\pi(i),\pi(i+1))$.

We claim that $\delta_{\phi(i) - 1} =
\delta(\epsilon_{\phi(i) - 1}, \epsilon_{\phi(i)})$ must be larger
than $\delta_j = \delta(\epsilon_j, \epsilon_{j+1})$ for all $j <
\phi(i)-1$.
First consider the triple
$\{\epsilon_{\psi(i)}, \epsilon_{\phi(i) - 1},
\epsilon_{\phi(i)}\}_{<}$, which is an edge of the copy of $H$ on $S$.
The colouring $C_1$ implies that \[\delta_{\phi(i)-1}=\delta(\epsilon_{\phi(i) - 1},
\epsilon_{\phi(i)}) > \delta(\epsilon_{\psi(i)}, \epsilon_{\phi(i) - 1}) =\max_{\psi(i) \leq j < \phi(i)-1} \delta_j.\]
This proves the claim for $\psi(i) \leq j < \phi(i)-1$. Next consider the triple
$\{\epsilon_j,
\epsilon_{\psi(i)}, \epsilon_{\phi(i)}\}_{<}$ with $j<\psi(i)$, which is also an edge of the copy of $H$ on $S$.
The colouring $C_1$ implies that
$$\delta_{j} \leq \delta(\epsilon_j,
\epsilon_{\psi(i)}) < \delta(\epsilon_{\psi(i)}, \epsilon_{\phi(i)})=\delta_{\phi(i)-1}.$$
This proves the claim in the remaining cases $1 \leq j < \phi(i)-1$.

Consider the set $\{\phi(2i-1)\}_{i=1}^{n/2}$, which contains $n/2$ distinct elements
since $\phi(i)=\max(\pi(2i-1),\pi(2i))$ and these pairs are
disjoint. Let $j_1,\ldots,j_{n/2}$ be a permutation of the odd numbers
up to $n-1$ such that $\phi(j_1)<\ldots<\phi(j_{n/2})$. By the claim in the previous paragraph,
we have $\delta_{\phi(j_1) - 1} < \cdots <
\delta_{\phi(j_{n/2})-1}$. Consider, for each $r < s$ with $r, s
\in \{1, \cdots, n/2\}$, the triple $\{\epsilon_{\psi(j_r)},
\epsilon_{\phi(j_r)}, \epsilon_{\phi(j_s)}\}_{<}$, which is an edge of the copy of $H$ on $S$.
Since $\psi(j_r)<\phi(j_r)<\phi(j_s)$, by property (b) of function $\delta$ and the claim above, $\delta(\epsilon_{\psi(j_r)}, \epsilon_{\phi(j_r)})
= \delta_{\phi(j_r) - 1}$ and $\delta(\epsilon_{\phi(j_r)},
\epsilon_{\phi(j_s)}) = \delta_{\phi(j_s) - 1}$. Therefore, by the
definition of $C_1$ we must have that $\{\delta_{\phi(j_r) - 1},
\delta_{\phi(j_s) - 1}\}$ is red. Hence we get a clique of size
$n/2$ in our original colouring. But this cannot happen so we have
a contradiction. All other cases follow similarly, so we're done.
\end{proof}

This result is closely related to another interesting question:
what is the maximum of $r(H)$ over all $k$-uniform hypergraphs
with $m$ edges (we assume here that the hypergraphs we consider do
not have isolated vertices)? For graphs, this question was posed
by Erd\H{o}s and Graham \cite{EG75} who conjectured that the
Ramsey number of a complete graph is at least the Ramsey number of
every graph with the same number of edges. As noted by Erd\H{o}s
\cite{E84}, this conjecture implies that there is a constant $c$
such that for all graphs $G$, $r(G) \leq 2^{c \sqrt{e(G)}}.$ The
best result in this direction, proven by Alon, Krivelevich and
Sudakov \cite{AKS03}, is that $r(G) \leq 2^{c \sqrt{e(G)} \log
e(G)}.$ For hypergraphs, one can naturally ask a question similar
to the Erd\H{o}s-Graham conjecture, i.e, is there a constant $c =
c(k)$ such that for every $k$-uniform hypergraph $H$, $r(H) \leq
t_k(c\sqrt[k]{e(H)})$? The proof of Theorem 6 has the following
corollary:

\begin{corollary}
There is a positive constant $c$ such that for each positive integer $m$, there is a $3$-uniform hypergraph $H$ with at most $m$ edges
 such that the $4$-colour Ramsey number of $H$ satisfies $r(H,H,H,H) \geq 2^{2^{c\sqrt{m}}}$.
\end{corollary}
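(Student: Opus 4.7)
The plan is to re-run the construction from Theorem 6 and reinterpret its output in terms of the number of edges rather than the maximum degree. The only new ingredient is a count of how many edges the hypergraph produced there actually has.

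First, I would bound $e(H_n)$ for the hypergraph $H_n$ used in the proof of Theorem 6, namely the $3$-uniform hypergraph on $\{v_1, \ldots, v_n\}$ (with $n$ even) whose edges are $\{v_i, v_{i+1}, v_j\}$ for all $1 \leq i,j \leq n$ (indices modulo $n$). Each such edge is determined by a cyclic consecutive pair $\{v_i, v_{i+1}\}$ together with a third vertex $v_j \notin \{v_i, v_{i+1}\}$, so $e(H_n) \leq n(n-2) < n^2$. Consequently, given $m$, I choose $n$ to be the largest even integer satisfying $n^2 \leq m$; then $e(H_n) \leq m$ and $n \geq c_0\sqrt{m}$ for an absolute constant $c_0 > 0$, provided $m$ exceeds some absolute threshold (the finitely many smaller values are absorbed by shrinking the final constant $c$).

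Next, I would invoke the construction in the proof of Theorem 6 for this value of $n$: it exhibits a $4$-coloring of the complete $3$-uniform hypergraph on a vertex set of size at least $2^{2^{n/4}}$ that contains no monochromatic copy of $H_n$. Hence
\[ r(H_n, H_n, H_n, H_n) > 2^{2^{n/4}} \geq 2^{2^{c\sqrt{m}}} \]
for a suitable constant $c > 0$ (for instance $c = c_0/4$), which is exactly the claim.

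The upshot is that Theorem 6 already produces a doubly exponential lower bound whose inner exponent is linear in the vertex count $n$, while the hypergraph it constructs has $\Theta(n^2)$ edges; translating from $n$ to $m = \Theta(n^2)$ only replaces the inner exponent $n$ by $\sqrt{m}$. There is no genuine obstacle here: the corollary is essentially a bookkeeping consequence of Theorem 6, the only minor subtlety being the parity condition that $n$ be even in that construction.
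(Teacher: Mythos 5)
Your proposal is correct and follows essentially the same route as the paper, which likewise observes that the hypergraph $H$ from the proof of Theorem 6 has fewer than $n^2$ edges and satisfies $r(H,H,H,H) \geq 2^{2^{n/4}} \geq 2^{2^{\sqrt{e(H)}/4}}$. Your version merely adds the explicit (and harmless) bookkeeping of choosing the largest even $n$ with $n^2 \leq m$.
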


Indeed the $3$-uniform hypergraph $H$ constructed in the proof of Theorem 6 has $n$ vertices and less than $n^2$ edges,
while $r(H,H,H,H) \geq t_3(n/4) \geq t_3(\sqrt{e(H)}/4)$.  This corollary demonstrates
that the multicolour version of the hypergraph analogue of the Erd\H{o}s-Graham conjecture is false.

In the other direction, we prove the following
theorem:

\begin{theorem}
The $q$-colour Ramsey number of any
$k$-uniform hypergraph $H$ with $m$ edges satisfies
\[r(H, \cdots, H) \leq t_k(c \sqrt{m})\]
for $k \geq 4$, and
\[r(H, \cdots, H) \leq t_3(c \sqrt{m} \log m)\]
for $k=3$, where constant $c$ depends only on $k$ and $q$.
\end{theorem}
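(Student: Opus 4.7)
The plan is to derive Theorem 7 as a direct corollary of the bound $r(H,\ldots,H) \leq r_k(l;q)^{(2l\Delta)^{l}}$ from the Remark following Theorem 5, where $l$ denotes the strong chromatic number of $H$. The only new ingredient required is a bound on $l$ in terms of the number of edges $m$ alone.

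To bound the strong chromatic number, observe that $l$ equals the ordinary chromatic number of the auxiliary graph $H'$ on $V(H)$ in which two vertices are joined whenever they share an edge of $H$. Each edge of $H$ contributes at most $\binom{k}{2}$ edges to $H'$, so $e(H') \leq \binom{k}{2}m$. The elementary fact that a graph with $M$ edges has chromatic number at most $(1+\sqrt{1+8M})/2$, which follows by passing to a colour-critical subgraph of minimum degree $\chi(H')-1$, then gives $l \leq c_k\sqrt{m}$. Combined with the trivial bound $\Delta \leq m$ (any vertex lies in at most $m$ edges), this yields $(2l\Delta)^l \leq (cm^{3/2})^{c\sqrt{m}} = 2^{O(\sqrt{m}\log m)}$.

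Substituting these estimates together with the Erd\H{o}s--Rado bound $r_k(l;q) \leq t_k(cl)$ into the Remark gives
$$r(H,\ldots,H) \leq \bigl(t_k(c\sqrt{m})\bigr)^{2^{O(\sqrt{m}\log m)}},$$
so the logarithm is at most $(2l\Delta)^l \cdot t_{k-1}(c\sqrt{m}) = 2^{O(\sqrt{m}\log m)} \cdot t_{k-1}(c\sqrt{m})$. For $k \geq 4$ the layer $t_{k-1}(c\sqrt{m})$ is already at least doubly exponential in $\sqrt{m}$, so it dominates the multiplier; a modest increase in the innermost constant absorbs the factor, producing $t_k(c'\sqrt{m})$. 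For $k = 3$, however, $t_{k-1}(c\sqrt{m}) = 2^{c\sqrt{m}}$ contributes only $O(\sqrt{m})$ to the outer exponent, which combines additively with $O(\sqrt{m}\log m)$, and exponentiating yields $t_3(c'\sqrt{m}\log m)$.

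The only non-bookkeeping step is the chromatic-number bound in the first paragraph; once $l \leq c_k\sqrt{m}$ is in hand everything else is routine computation. The discrepancy between the two bounds in the statement reflects exactly the fact that a tower of height at least three absorbs any polylogarithmic-in-$m$ multiplicative factor, whereas a tower of height two does not, so the $\log m$ loss for $k=3$ is intrinsic to this approach.
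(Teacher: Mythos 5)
Your proposal is correct and follows essentially the same route as the paper: the paper likewise derives Theorem 7 from the remark after Theorem 4 combined with a lemma stating that a $k$-uniform hypergraph with $m$ edges has strong chromatic number at most $k\sqrt{m}$, proved via the same auxiliary graph $H'$ with $e(H')\leq \binom{k}{2}m$ (the paper justifies $\binom{\chi}{2}\leq e(H')$ by noting an optimal colouring has an edge between any two colour classes, a cosmetic variant of your colour-critical-subgraph argument). The remaining bookkeeping with $\Delta\leq m$ and the Erd\H{o}s--Rado bound matches the intended computation.
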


Theorem 7 follows immediately from the remark after the proof of Theorem 4 together with the following lemma.
\begin{lemma}
Every $k$-uniform hypergraph $H$ with $m$ edges has strong chromatic number at most $k\sqrt{m}$.
\end{lemma}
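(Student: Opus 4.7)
The plan is to reduce the strong-coloring problem for the hypergraph $H$ to the ordinary chromatic-number problem for an associated graph. Let $H'$ denote the \emph{shadow graph} on $V(H)$ in which two distinct vertices are adjacent if and only if they both lie in some common edge of $H$. By the definition of strong chromatic number, a map $V(H) \to [c]$ uses no repeated color inside any edge of $H$ if and only if it is a proper vertex coloring of $H'$. Hence the strong chromatic number of $H$ equals $\chi(H')$, and it suffices to show $\chi(H') \le k\sqrt{m}$.

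Next I would bound $e(H')$. Each edge of $H$ contributes at most $\binom{k}{2}$ pairs to the edge set of the shadow graph (pairs may be shared between different hyperedges, which only helps us), so
\[
e(H') \le \binom{k}{2} m = \frac{k(k-1)}{2}\, m.
\]
Then I would invoke the standard bound $\chi(G) \le \tfrac{1}{2}\bigl(1+\sqrt{1+8\,e(G)}\bigr)$ valid for every graph $G$. The quickest derivation is to pass to a color-critical subgraph $G_0 \subseteq G$ with $\chi(G_0)=\chi(G)$; then every vertex of $G_0$ has degree at least $\chi(G)-1$, giving $2\,e(G) \ge 2\,e(G_0) \ge \chi(G)\bigl(\chi(G)-1\bigr)$, which solves to the stated inequality. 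Applying this bound to $H'$ yields
\[
\chi(H') \le \tfrac{1}{2}\bigl(1+\sqrt{1+4k(k-1)m}\bigr).
\]

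The final step is to verify that the right-hand side is at most $k\sqrt{m}$. This is equivalent, after isolating the square root and squaring, to the inequality $\sqrt{m}\le m$, i.e.\ $m\ge 1$; the case $m=0$ (in which $H$ has no edges and strong chromatic number at most $1$) is trivial. I do not anticipate any real obstacle here: the argument is just the shadow-graph reduction plus the textbook bound relating $\chi$ to the number of edges, followed by a one-line numerical check. The only mild care needed is to note that the bound $e(H')\le\binom{k}{2}m$ holds irrespective of how the edges of $H$ overlap.
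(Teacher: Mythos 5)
Your proof is correct and follows essentially the same route as the paper: pass to the shadow graph $H'$, bound $e(H') \leq \binom{k}{2}m$, and apply the standard inequality $\binom{\chi}{2} \leq e(G)$ (which your critical-subgraph derivation and the paper's ``edge between any two colour classes'' argument both establish). The final arithmetic reduces to $\sqrt{m} \leq m$ in both versions, so there is nothing substantively different to flag.
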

\begin{proof}
Let $H'$ be the graph on the same vertex set as $H$ with two vertices adjacent if they lie in an edge of $H$. The strong
chromatic number of $H$ is clearly equal to the chromatic number of $H'$. The number $e(H')$ of edges of $H'$
is at most ${k \choose 2}m \leq {k\sqrt{m} \choose 2}$ since each edge of $H$ gives rise to at most ${k \choose 2}$
edges of $H'$. To finish the proof, note that the chromatic number $\chi$ of any graph with $t$ edges satisfies
${\chi \choose 2} \leq t$ because in an optimal colouring there should be an edge between any two colour classes.
\end{proof}

\section{Conclusion}

Throughout this paper we have aimed for simplicity in the
exposition. Accordingly, in proving Theorem 2, we have cut some
corners to make the proof as pithy as possible. The resulting
constant, $c = (2k)^{k-1}$, is doubtless far from best possible,
but we believe that this loss is outweighed by the resulting
brevity of exposition.

As we noted in the introduction, our Theorem 4 implies that, for
$k \geq 4$, there exists a constant $c = c(k)$ such that, for any
graph $H$ on $n$ vertices with maximum degree $\Delta$, $r(H) \leq
t_k(c \Delta) n$, where the constant $c$ depends only on $k$. For
$k=3$, however, it only implies that
\begin{equation} \label{1} r(H) \leq 2^{2^{c\Delta \log \Delta}} n,\end{equation}
which could perhaps be improved a little. It is
worth noting also that for $k = 2$, the best known bound, proved
by Graham, R\"odl and Ruci\'nski \cite{GRR00} using a very
different method is
\begin{equation} \label{2} r(H) \leq 2^{c \Delta \log^2 \Delta} n.\end{equation}
In light of the situation for higher $k$ as well as the lower bound constructions for $k=2,3$, the following is a
natural question:

\begin{problem}
Can the $\log$ factors in the highest exponent of the upper bounds (\ref{1}) and (\ref{2}) be removed?
\end{problem}

This problem is certainly difficult in the $k=2$ case, but maybe a different extension of the methods of \cite{FS07}
or an appropriate generalisation of the work of Graham, R\"odl and
Ruci\'nski could resolve the $k=3$ case.

It also seems likely to us that the lower bound for this problem
is essentially the same as the upper bound. So we have the following open problem:

\begin{problem}
Is it true that for all $k$ and $\Delta$ and sufficiently large $n$, there exists a $k$-uniform
hypergraph $H$ with maximum degree $\Delta$ and $n$ vertices such
that $r(H) \geq t_k(c\Delta) n$, where $c>0$ only depends on $k$?
\end{problem}

\noindent {\bf Acknowledgement.}\, We would like to thank Jan Hladky
for finding several typos in an earlier version of this paper.

\end{document}